\documentclass[12pt]{article}
\usepackage[T2A]{fontenc}
\usepackage[utf8]{inputenc}
\usepackage{tikz,bussproofs,amssymb,amsmath,amsthm,MnSymbol}

\EnableBpAbbreviations

\newcommand{\tikzmark}[1]{\tikz[overlay,remember picture] \node (#1) {};}

\theoremstyle{plain}
\newtheorem{theorem}{Theorem}
\newtheorem{lemma}{Lemma}
\newtheorem{propos}{Proposition}
\newtheorem{remark}{Remark}

\author{Daniyar Shamkanov}
\title{Cyclic proofs in the equational version of Primitive recursive arithmetic}
\date{}
\begin{document}
\maketitle
\begin{abstract}
In this brief note, we present a cyclic proof system developed specifically for the equational version of Primitive recursive arithmetic and establish the equivalence of the two systems. A distinctive feature of our approach is that it does not rely on automata-theoretic methods but is implemented primarily using the tools of structural proof theory.
\end{abstract}
\subsection*{Introduction}


Modern proof theory has increasingly turned to cyclic structures as a powerful alternative to traditional finitary proofs in systems that capture various aspects of inductive reasoning (as well as in provability logics). Cyclic proof systems replace explicit induction rules with global soundness conditions, allowing proofs to be represented as finite directed graphs containing cycles. While most such systems for various versions of arithmetic are quite complex, we propose a simple "toy" system, which we hope will pave the way for a better understanding of cyclic proofs in arithmetic.
In this brief note, we present a cyclic proof system developed specifically for the equational version of Primitive recursive arithmetic and establish the equivalence of the two systems. A distinctive feature of our approach is that it does not rely on automata-theoretic methods but is implemented primarily using the tools of structural proof theory.
\subsection*{Primitive recursive arithmetic} Skolem's primitive recursive arithmetic, originated in \cite{Skolem23}, has several formulations in slightly different languages. A variant of this system in a language without logical connectives and quantifiers was given by Curry in \cite{Curry41}, and its refined version was presented by Goodstein in \cite{Goodstein54} (see also \cite{Goodstein57}). In this section, our definition of primitive recursive arithmetic basically follows Goodstein.

In order to define the language of $\mathsf{PRA}$, we recall the definition of primitive recursive terms. The sets $\mathsf{PR}^n$ of $n$-ary primitive recursive function symbols are inductively defined as
\begin{itemize}
    \item $0\in \mathsf{PR}^0$, $o\in \mathsf{PR}^1$, $s \in \mathsf{PR}^1$ and $I^n_k \in \mathsf{PR}^n$ ($1\leq k \leq n$);
    \item if $g \in \mathsf{PR}^m$ for $m>1$ and $f_1,\dotsc, f_m \in \mathsf{PR}^n$, then $C(g, f_1, \dotsc, f_m)\in \mathsf{PR}^n$;
    \item if $g \in \mathsf{PR}^n$ and $h \in \mathsf{PR}^{n+2}$, then $R(g,h)\in \mathsf{PR}^{n+1}$.
\end{itemize}
Primitive recursive terms, or terms of the system $\mathsf{PRA}$, are build from a countable set of variables $\{x_0, x_1, x_2 \dotsc \}$ by means of the function symbols from $\mathsf{PR}:=\bigcup_{n\in \mathbb{N}} \mathsf{PR}^n$. Formulas of $\mathsf{PRA}$ are defined as equations between these terms.

The system $\mathsf{PRA}$ is given by the initial equations $A=A$ and the following inference rules for primitive recursive function symbols: 
\begin{gather*}
    \AXC{$F(0)=A$}
    \LeftLabel{}
    \RightLabel{ ,}
    \UIC{$F(o(B))=A$ }
    \DisplayProof\qquad
    \AXC{$A=F(0)$}
    \LeftLabel{}
    \RightLabel{ ,}
    \UIC{$A=F(o(B))$ }
    \DisplayProof\\\\
    \AXC{$F(B_k)=A$}
    \LeftLabel{}
    \RightLabel{ ,}
    \UIC{$F(I^n_k(B_1,\dotsc, B_n))=A$ }
    \DisplayProof\qquad
    \AXC{$A=F(B_k)$}
    \LeftLabel{}
    \RightLabel{ ,}
    \UIC{$A=F(I^n_k(B_1,\dotsc, B_n))$ }
    \DisplayProof\\\\
    \AXC{$F(g(f_1 (\vec{B}),\dotsc, f_m(\vec{B})))=A$}
    \LeftLabel{}
    \RightLabel{ ,}
    \UIC{$F(C(g,f_1,\dotsc, f_m)(\vec{B}))=A$ }
    \DisplayProof\qquad
    \AXC{$A=F(g(f_1 (\vec{B}),\dotsc, f_m(\vec{B})))$}
    \LeftLabel{}
    \RightLabel{ ,}
    \UIC{$A=F(C(g,f_1,\dotsc, f_m)(\vec{B}))$ }
    \DisplayProof \\\\
    \AXC{$F(g(\vec{B}))=A$}
    \LeftLabel{}
    \RightLabel{ ,}
    \UIC{$F(R(g,h)(\vec{B}, 0))=A$ }
    \DisplayProof\qquad
    \AXC{$A=F(g(\vec{B}))$}
    \LeftLabel{}
    \RightLabel{ ,}
    \UIC{$A=F(R(g,h)(\vec{B}, 0))$}
    \DisplayProof
\end{gather*}
\begin{gather*}
    \AXC{$F(h(\vec{B}, E , R(g,h) (\vec{B}, E)))=A$}
    \LeftLabel{}
    \RightLabel{ ,}
    \UIC{$F(R(g,h)(\vec{B}, s(E)))=A$}
    \DisplayProof\qquad
    \AXC{$A=F(h(\vec{B}, E , R(g,h) (\vec{B}, E)))$}
    \LeftLabel{}
    \RightLabel{ .}
    \UIC{$A=F(R(g,h)(\vec{B}, s(E)))$ }
    \DisplayProof\\
\end{gather*}
In these rules, we assume that $F$ contains a single occurrence of a fresh variable that is replaced by various terms. 

The system $\mathsf{PRA}$ also contains the rules:
\begin{gather*}
    \AXC{$G(x)=H(x)$}
    \LeftLabel{$\mathsf{sub}$}
    \RightLabel{ ,}
    \UIC{$G(A)=H(A)$ }
    \DisplayProof\qquad
    \AXC{$A=B$}
    \LeftLabel{$\mathsf{rep}$}
    \RightLabel{ ,}
    \UIC{$G(A)=G(B)$ }
    \DisplayProof\qquad
    \AXC{$A=B$}
    \AXC{$B=C$}
    \LeftLabel{$\mathsf{tran}$}
    \RightLabel{ ,}
    \BIC{$A=C$ }
    \DisplayProof\\\\
    \AXC{$G(0)=H(0)$}
    \AXC{$G(s(x))=J(x,G(x))$}
    \AXC{$J(x,H(x))=H(s(x))$}
    \LeftLabel{$\mathsf{ind}$}
    \RightLabel{ .}
    \TIC{$G(x)=H(x)$ }
    \DisplayProof
    \end{gather*}

A \emph{proof of an equation $A=B$ in the system $\mathsf{PRA}$} is defined in the usual way as a finite tree of equations constructed according to the rules of $\mathsf{PRA}$ such that any of its leaves is marked by an initial equation and the root is marked by $A=B$. If there is a proof of $A=B$, then the equation $A=B$ is called \emph{provable in $\mathsf{PRA}$}.

\subsection*{Cyclic arithmetical proofs} The system $\mathsf{CPRA}$ is obtained from $\mathsf{PRA}$ by replacing the inference rule ($\mathsf{ind}$) with the rule 
\begin{gather*}
    \AXC{$G(0)=H(0)$}
    \AXC{$G(s(x))=H(s(x))$}
    \LeftLabel{$\mathsf{case}$}
    \RightLabel{ ,}
    \BIC{$G(x)=H(x)$ }
    \DisplayProof
\end{gather*}
where $x$ is called the \emph{active variable} of the given inference. 

A \emph{cyclic proof of an equation $A=B$} is a pair $(\kappa, d)$, where $\kappa $ is a finite tree of equations constructed according to the rules of $\mathsf{CPRA}$ with the root marked by $A=B$ and $d$ is a function with the following properties: (i) the
function $d$ is defined on the set of all leaves of $\kappa$ that are not marked by the initial equations; (ii) the image $d(a)$ of a leaf $a$ lies on the path from the root of $\kappa$ to the leaf $a$
and is not equal to $a$; (iii) there is an application of the rule ($\mathsf{case}$) on the path from $d(a)$ to $a$, and this path intersects the application on the right premise; (iv) there are no applications of the rule ($\mathsf{sub}$) on the path between $d(a)$ and $a$; (v) $a$ and $d(a)$ are marked by the same equations. We also require that (vi) the path from $d(a)$ to $a$ does not intersect any application of the rule ($\mathsf{case}$) on the left premise and (vii) the node $d(a)$, for any leaf $a$, is the conclusion of an application of the rule ($\mathsf{case}$).
If the function $d$ is defined at a leaf $a$, then we say that the nodes $a$ and $d(a)$ are connected by a back-link.

An \emph{equation $A=B$ is provable in the arithmetic $\mathsf{CPRA}$} if there is a cyclic proof of $A=B$.
\begin{remark}
It is easy to show that conditions (vi) and (vii) from the definition of cylic proofs are redundant. In other words, their adoption does not change the class of provable equations. We add these conditions to simplify further study of the system $\mathsf{CPRA}$.
\end{remark}
\begin{propos}
If an equation $A=B$ is provable in $\mathsf{PRA}$, then it is provable in $\mathsf{CPRA}$.
\end{propos}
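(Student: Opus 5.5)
The proof goes by induction on the height of a $\mathsf{PRA}$ proof of $A=B$. For each node of that proof I build a cyclic proof of the equation at that node. The rules of $\mathsf{PRA}$ other than ($\mathsf{ind}$) are also rules of $\mathsf{CPRA}$. So for those rules I apply the same rule to the cyclic proofs $(\kappa_i,d_i)$ already obtained for the premises, and take $d=\bigcup_i d_i$. Conditions (i)--(vii) still hold: each back-link of $\kappa_i$ joins a leaf to one of its ancestors inside $\kappa_i$, and every condition concerns only the path between $d(a)$ and $a$. That path is unchanged by adding a new root below $\kappa_i$.

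The only real case is ($\mathsf{ind}$). Assume we already have cyclic proofs $\pi_0$ of $G(0)=H(0)$, $\pi_1$ of $G(s(x))=J(x,G(x))$ and $\pi_2$ of $J(x,H(x))=H(s(x))$. I build the following tree $\kappa$. The root is $G(x)=H(x)$ and is the conclusion of ($\mathsf{case}$) with active variable $x$.

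\begin{itemize}
\item The left premise $G(0)=H(0)$ is proved by $\pi_0$.
\item The right premise $G(s(x))=H(s(x))$ is obtained by ($\mathsf{tran}$) from $G(s(x))=J(x,G(x))$ and $J(x,G(x))=H(s(x))$.
\item The first of these is proved by $\pi_1$.
\item The second is obtained by ($\mathsf{tran}$) from $J(x,G(x))=J(x,H(x))$ and $J(x,H(x))=H(s(x))$.
\item $J(x,H(x))=H(s(x))$ is proved by $\pi_2$.
\item $J(x,G(x))=J(x,H(x))$ is obtained by ($\mathsf{rep}$) with context $J(x,\cdot)$ from a leaf $a$ labelled $G(x)=H(x)$.
\end{itemize}

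I set $d(a)$ to be the root, and otherwise keep the back-links of $\pi_0,\pi_1,\pi_2$.

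Now I check the conditions for the new back-link. The leaf $a$ carries the same equation as the root, which gives (v). The root is the conclusion of a ($\mathsf{case}$) inference, which gives (vii). The path from the root to $a$ passes through that ($\mathsf{case}$) on its right premise, which gives (iii). The path contains only ($\mathsf{tran}$) and ($\mathsf{rep}$) inferences, so there is no ($\mathsf{sub}$) and no left premise of a ($\mathsf{case}$); this gives (iv) and (vi). The old back-links stay inside their own subproofs, so they remain correct as in the first step.

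The only point I expect to need care is the form of ($\mathsf{rep}$). If the context $G$ in ($\mathsf{rep}$) must contain a single occurrence of the replaced position, while $J(x,y)$ contains $y$ several times, I replace the occurrences one at a time. This means a chain of ($\mathsf{rep}$) and ($\mathsf{tran}$) steps, each using a copy of the leaf $G(x)=H(x)$ back-linked to the root. Conditions (iii)--(vii) are verified for each copy exactly as above. If $y$ does not occur in $J(x,y)$ at all, $J(x,G(x))=J(x,H(x))$ is an initial equation and no back-link is needed.
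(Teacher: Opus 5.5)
Your proposal is correct and matches the paper's proof. The paper also replaces each ($\mathsf{ind}$) by a ($\mathsf{case}$) whose right premise is derived by ($\mathsf{tran}$) and ($\mathsf{rep}$) from a leaf $G(x)=H(x)$ back-linked to the root; only the bracketing of the two ($\mathsf{tran}$) steps differs, and your check of conditions (i)--(vii), your argument that composition preserves back-links, and your remark on multiple occurrences in ($\mathsf{rep}$) spell out details the paper leaves implicit.
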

\begin{proof}
Assume we have a proof $\pi$ of $A=B$ in $\mathsf{PRA}$. We replace every application of ($\mathsf{ind}$) in the proof $\pi$ 
\[
    \AXC{$G(0)=H(0)$}
    \AXC{$G(s(x))=J(x,G(x))$}
    \AXC{$J(x,H(x))=H(s(x))$}
    \LeftLabel{$\mathsf{ind}$}
    \RightLabel{ }
    \TIC{$G(x)=H(x)$ }
    \DisplayProof\]
with the following cyclic derivation
\[\scriptsize
    \AXC{$G(0)=H(0)$}
    \AXC{$G(s(x))=J(x,G(x))$}
    \AXC{$G(x)=H(x)$ \tikzmark{A}}
    \LeftLabel{$\mathsf{rep}$}
    \UIC{$J(x,G(x))=J(x,H(x))$}
    \LeftLabel{$\mathsf{tran}$}
    \BIC{$G(s(x))=J(x,H(x))$}
    \AXC{$J(x,H(x))=H(s(x))$}
    \LeftLabel{$\mathsf{tran}$}
    \BIC{$G(s(x))=H(s(x))$}
    \LeftLabel{$\mathsf{case}$}
    \RightLabel{ }
    \BIC{$G(x)=H(x)$ \tikzmark{B}}
    \DisplayProof
    \begin{tikzpicture}[overlay,remember picture,>=latex,distance=8.0cm] \draw[->, thick](A.north east) to [out=5,in=-10](B.east);
\end{tikzpicture}
\] 
and obtain the required cyclic proof of $A=B$ in $\mathsf{CPRA}$.
\end{proof}

\subsection*{From cyclic proofs to ordinary ones in an extended language} In this section, we consider a formulation of primitive recursive arithmetic in a first-order language with only bounded quantification. Formulas of $\mathsf{PRA}^\prime$ are built from
equations between primitive recursive terms by means of Boolean connectives and bounded quantifiers: if $P$ is a formula of $\mathsf{PRA}^\prime$, $t$ is a primitive recursive term and $x$ is a variable such that $x$ does not occur in $t$, then $\forall x\leqslant t \; P$ is a formula of $\mathsf{PRA}^\prime$.

\begin{theorem}
If an equation $A=B$ is provable in $\mathsf{CPRA}$, then it is provable in $\mathsf{PRA}^\prime$.
\end{theorem}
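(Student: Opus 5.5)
We argue by induction over the structure of the cyclic proof, turning each back-link into an instance of induction carried out in $\mathsf{PRA}^\prime$. Fix a cyclic proof $(\kappa,d)$ of $A=B$. For every node $c$ of $\kappa$ that is a back-link target, let $x_c$ be the active variable of the application of $(\mathsf{case})$ at $c$; such an application exists by condition (vii). Since no $(\mathsf{sub})$ occurs between $d(a)$ and $a$ (condition (iv)), every equation on such a path is a genuine consequence of the equations above it with the free variables held fixed. The only things that change along the path are the instances of $(\mathsf{case})$, and each of these specializes a variable $y$ either to $0$ or to $s(y)$. Conditions (iii) and (vi) say that each path from $d(a)$ to $a$ passes only through right premises of $(\mathsf{case})$, and through at least one. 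Reading the path upward, the equation at $a$ is therefore the equation at $d(a)$ under a substitution $\sigma_a$ that replaces certain variables $y$ by $s(y)$. I would interpret this inside $\mathsf{PRA}^\prime$ as follows. The variable $y$ at the leaf really stands for the predecessor of the variable at the root. So reaching $a$ amounts to re-proving the companion's equation at strictly smaller values of the case variables used along the path.

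The key step is to assign to each node $c$ of $\kappa$ a formula $\Phi_c$ of $\mathsf{PRA}^\prime$ and to prove it by an induction on a primitive recursive measure. The natural choice is bounded quantification over the variables: for a companion node $c$ with equation $G=H$ and variables $\vec y$, take
\[\Psi_c(z) := \forall \vec y\leqslant z\;(\textstyle\sum \vec y\leqslant z \rightarrow G=H),\]
or, more simply, a bounded universal statement over the sum of the variables that occur in case-splits. Since $\kappa$ may have several nested back-links, I would process the companions in an order compatible with the tree. Every cycle passes a right premise of some $(\mathsf{case})$, so the sum of the variables strictly decreases along every back-link. I would then set up one simultaneous induction on $z$ for the conjunction $\bigwedge_c \Psi_c(z)$. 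The induction hypothesis $\Psi_c(z)$ must discharge every leaf whose companion is $c$ at the stage $z+1$. To make this work, I would show node by node, going downward from the leaves, that the formula at each node follows in $\mathsf{PRA}^\prime$ from the hypotheses $\Psi_c(z)$ together with bounds on the current values of the variables. The rules $(\mathsf{rep})$, $(\mathsf{tran})$ and the defining rules translate directly. The rule $(\mathsf{case})$ translates into a case distinction $y=0 \vee y=s(\mathrm{pd}(y))$, which is provable in $\mathsf{PRA}^\prime$ by induction, after which we substitute. Applications of $(\mathsf{sub})$ lying outside all cycles are harmless, because there the whole universally closed statement is instantiated.

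After the induction, $\forall z\,\Psi_{c}(z)$ gives the root equation for every companion $c$. The outer, acyclic part of $\kappa$ then yields $A=B$ as a quantifier-free theorem of $\mathsf{PRA}^\prime$, since its free variables are implicitly universal. I would invoke induction in $\mathsf{PRA}^\prime$ for bounded formulas, which is available because bounded formulas are equivalent to primitive recursive equations.

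I expect the main obstacle to be choosing the measure. Along a path, the substitution $y\mapsto s(y)$ decreases the chosen variable, but other variables may be renamed or may also appear in the equation. Nested cycles with different active variables also mean that a plain sum need not decrease in the right sense; a lexicographic order might be needed instead. My first attempt would be to use the bounded formula quantifying over all variables below a single bound $z$, and to argue that every cycle strictly lowers the maximum of the case variables it passes. This could fail when distinct cycles change different variables. If it does, I would fall back on an induction that is nested according to the tree order of the companions: inner companions are proved by an inner bounded induction, with the outer variables as parameters.
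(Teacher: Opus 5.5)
You have the right ingredients: a bounded formula $\forall\vec y\leqslant z\,(\dots)$, a sum of case variables as the measure, the split $y=0\vee y=s(\mathrm{pd}(y))$, and induction inside $\mathsf{PRA}^\prime$. But the proposal stops at the step that carries the proof. You never commit to an induction formula and check its step, and you name this yourself as the ``main obstacle''. None of the candidates you offer settles it.

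If $\Psi_c$ quantifies over the variables of the equation at $c$, different companions have different variable sets, and $(\mathsf{tran})$ can introduce fresh variables on the way up. A bud above an inner companion that points to an outer one may then need the outer equation at values of variables the current bound does not control, so the simultaneous induction does not close. Your ``first attempt'', one bound $z$ on all variables, is induction on the maximum and fails as you suspect: a cycle that lowers $y_1$ can leave $y_2=s(z)$. The nested fallback is unspecified: the outer hypothesis must also be usable at smaller values of the inner variables, and you do not say which formula achieves this.

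In fact your remark that the sum decreases along every back-link is correct for one global tuple of all active variables, and that is the measure that works. But you never turn it into a checked induction step. You also leave open what is proved at non-companion nodes, at intermediate $(\mathsf{case})$ nodes, and at left premises, whose subtrees may contain cycles of their own.

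The paper closes the argument by making the decrease local to a single $(\mathsf{case})$ step:
\begin{itemize}
\item It inducts on the height of $\kappa$. By (iv) and (vi), the subtrees above $(\mathsf{sub})$ premises and above left $(\mathsf{case})$ premises are self-contained cyclic proofs, so they are already provable.
\item In the remaining main fragment, let $y_1,\dotsc,y_n$ be all active variables. Let $Q$ be the conjunction of the equations at \emph{all} $(\mathsf{case})$ conclusions, not only the companions.
\item A subinduction on the cut-height gives $\mathsf{PRA}^\prime\vdash Q\to A_w=B_w$ for every node $w$ of the main fragment. A bud is covered because its companion is a conjunct of $Q$.
\item Set $P(z):=\forall y_1,\dotsc,y_n\leqslant z\,(y_1+\dotsb+y_n=z\to Q)$. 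The step $P(z)\to P(s(z))$ examines one conjunct $A_v=B_v$ with active variable $y_j$. If $y_j=0$, use the left premise. If $y_j=s(y_j^\prime)$, exactly one summand drops by one, so $P(z)$ yields $Q$ at the new tuple, and the claim for the right premise of $v$ gives $A_v=B_v$.
\end{itemize}
All conjuncts are evaluated at the same tuple and only one inference is crossed per step, so nested cycles never enter. Instantiating $z:=y_1+\dotsb+y_n$ then gives $Q$, and hence $A=B$.
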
 
\begin{proof}
Assume we have a cyclic proof $\pi=(\kappa, d)$ of $A=B$ in $\mathsf{CPRA}$. We prove that $\mathsf{PRA}^\prime \vdash A=B$ by induction on the height of $\kappa$.

For any node $w$ of $\kappa$, we denote the equation of the node $w$ by $A_w=B_w$ and the subtree of $\kappa$ with the root $w$ by $\kappa_w$. We define $\mathit{rk} (w)$ as the height of the tree obtained from $\kappa_w$ by cutting every branch at the first from the root premise of the rule ($\mathsf{sub}$) and the first from the root premise of the rule ($\mathsf{case}$). In other words, $\mathit{rk} (w)$ is the length of the longest path in the tree $\kappa_w$ that is directed away from the root and does not intersect applications of rules ($\mathsf{sub}$) and ($\mathsf{case}$). For example, if $\kappa_w$ consists only of one node, then $\mathit{rk} (w)=0$.

We define the \emph{main fragment of $\pi=(\kappa, d)$} as a tree obtained from $\pi$ by cutting every branch of $\kappa$ at the first from the root premise of the rule ($\mathsf{sub}$) and the first from the root left premise of the rule ($\mathsf{case}$). 
We denote the set of nodes of the main fragment of $\pi$ by $W$ and the set of conclusions of applications of the rule ($\mathsf{case}$) in the main fragment by $V$. 
 
We also put 
\[Q:=\bigwedge_{v\in V} A_v=B_v.\]

Now we claim that, for any $w\in W$,
\begin{gather}\label{claim1}
\mathsf{PRA}^\prime \vdash Q\to A_w=B_w. 
\end{gather}
We prove the claim applying the induction hypothesis for cyclic proofs $\pi^\prime =(\kappa^\prime, d^\prime)$ with the height of $\kappa^\prime$ being less than the height of $\kappa$. In addition, we argue by subinduction on $\mathit{rk}(w)$. 

Case 1: the tree $\kappa_w$ consists only of an initial equation. In this case, the equation $ A_w=B_w$ has the form $C=C$. Trivially, we have $\mathsf{PRA}^\prime \vdash Q\to A_w=B_w$.

Case 2:  the tree $\kappa_w$ consists only of one leaf, and this leaf is not marked by an initial equation. In this case, the node $w$ is a leaf of $\kappa$ connected with another node $d(w)$ by a back-link. Since $w\in W$, we have $d(w)\in V$. Therefore,  
\[\mathsf{PRA}^\prime \vdash\bigwedge_{v\in V} A_v=B_v\to A_{d(w)}=B_{d(w)}.\]
Since the equation $A_w=B_w$ coincides with $A_{d(w)}=B_{d(w)}$, we immediately obtain $\mathsf{PRA}^\prime \vdash Q\to A_w=B_w$.

Case 3: the tree $\kappa_w$ has the form
\[
    \AXC{$\kappa^\prime$}
    \noLine
    \UIC{$\vdots$}
    \noLine
    \UIC{$G(x)=H(x)$}
    \LeftLabel{$\mathsf{sub}$}
    \RightLabel{ ,}
    \UIC{$G(C)=H(C)$}
    \DisplayProof
\]
where $G(C)=H(C)$ coincides with $A_w=B_w$. 

Since there are no applications of the rule ($\mathsf{sub}$) in between two nodes connected by a back-link, any leaf of $\kappa^\prime$ from the domain of $d$ is connected by a back-link with a node from $\kappa^\prime$. Hence, we have a cyclic proof $\pi^\prime =(\kappa^\prime, d^\prime)$ of $G(x)=H(x)$ in $\mathsf{CPRA}$. Note that the height of $\kappa^\prime$ is less than the height of $\kappa$. Thus, applying the induction hypothesis for $\pi^\prime$, we obtain $\mathsf{PRA}^\prime \vdash G(x)=H(x)$. Consequently, $\mathsf{PRA}^\prime \vdash G(C)=H(C)$ and $\mathsf{PRA}^\prime \vdash Q \rightarrow A_w=B_w$.

Case 4: the tree $\kappa_w$ has the form
\[
    \AXC{$\kappa^\prime$}
    \noLine
    \UIC{$\vdots$}
    \noLine
    \UIC{$G(0)=H(0)$}
    \AXC{$\kappa^{\prime\prime}$}
    \noLine
    \UIC{$\vdots$}
    \noLine
    \UIC{$G(s(x))=H(s(x))$}
    \LeftLabel{$\mathsf{case}$}
    \RightLabel{ ,}
    \BIC{$G(x)=H(x)$}
    \DisplayProof
\]
where $G(x)=H(x)$ coincides with $A_w=B_w$. In this case, $w\in V$. Trivially, we have 
\[\mathsf{PRA}^\prime \vdash\bigwedge_{v\in V} A_v=B_v\to A_{w}=B_{w},\]
i.e. $\mathsf{PRA}^\prime \vdash Q \rightarrow A_w=B_w$.

Case 5. The tree $\kappa_w$ has one of the following forms:
\[
        \AXC{$\kappa^\prime$}
    \noLine
    \UIC{$\vdots$}
    \noLine
    \UIC{$C=D$}
    \LeftLabel{$\mathsf{rep}$}
    \RightLabel{ ,}
    \UIC{$G(C)=G(D)$ }
    \DisplayProof\quad \quad
    \AXC{$\kappa^\prime$}
    \noLine
    \UIC{$\vdots$}
    \noLine
    \UIC{$A_w=D$}
    \AXC{$\kappa^{\prime\prime}$}
    \noLine
    \UIC{$\vdots$}
    \noLine
    \UIC{$D=B_w$}
    \LeftLabel{$\mathsf{tran}$}
    \RightLabel{ ,}
    \BIC{$A_w=B_w$ }
    \DisplayProof
\]
where $G(C)=G(D)$ coincides with $A_w=B_w$. From the subinduction hypotheses for children of $w$ in $\kappa_w$, we see $\mathsf{PRA}^\prime \vdash Q \rightarrow C=D$ ($\mathsf{PRA}^\prime \vdash Q \rightarrow A_w=D$ and $\mathsf{PRA}^\prime \vdash Q \rightarrow D=B_w$). Since $\mathsf{PRA}^\prime \vdash C=D \rightarrow G(C)=G(D)$ and $\mathsf{PRA}^\prime \vdash (A_w=D \wedge D=B_w) \rightarrow A_w=B_w$, we obtain $\mathsf{PRA}^\prime \vdash Q \rightarrow A_w=B_w$ in both cases.

The remaining case, when the equation $A_w=B_w$ is obtained in $\kappa_w$ by one of the inference rules for primitive recursive function symbols, can be easely checked in the same way as case 5, so we omit further details. The claim is proved.

Now recall that the root of the main fragment of $\pi$ is marked by $A=B$. From (\ref{claim1}), we immediately obtain
\begin{gather}\label{assertion1}
\mathsf{PRA}^\prime \vdash Q\to A=B.
\end{gather}

Let $y_1, \dotsc, y_n$ be the list of all active variables of applications of the rule ($\mathsf{case}$) in the main fragment of $\pi$. We set 
\[P(z):= \forall y_1,\dotsc , y_n\leqslant z\; (y_1+\dotsb + y_n=z \to Q),\]
where $\forall y_1,\dotsc , y_n\leqslant z$ is abbreviation for $\forall y_1\leqslant z\; \forall y_2\leqslant z \dotso \forall y_n \leqslant z$.

Now we claim  
\begin{gather}\label{claim2}
\mathsf{PRA}^\prime \vdash P(0), \qquad \mathsf{PRA}^\prime \vdash P(z)\to P(s(z)).
\end{gather}

Notice that, for each $v$ from $V$, the tree $\kappa_v$ has the form
\[
    \AXC{$\kappa^\prime$}
    \noLine
    \UIC{$\vdots$}
    \noLine
    \UIC{$G_v(0)=H_v(0)$}
    \AXC{$\kappa^{\prime\prime}$}
    \noLine
    \UIC{$\vdots$}
    \noLine
    \UIC{$G_v(s(y_j))=H_v(s(y_j))$}
    \LeftLabel{$\mathsf{case}$}
    \RightLabel{ ,}
    \BIC{$G_v(y_j)=H_v(y_j)$}
    \DisplayProof
\]
where $y_j$ is the active variable of the inference, and $G_v(y_j)=H_v(y_j)$ coincides with $A_v=B_v$. Since there are no left premises of the rule ($\mathsf{case}$) in between two nodes connected by a back-link, we have a cyclic proof $\pi^\prime =(\kappa^\prime, d^\prime)$ of $G_v(0)=H_v(0)$ in $\mathsf{CPRA}$. From the induction hypothesis for $\pi^\prime$, we obtain $\mathsf{PRA}^\prime \vdash G_v(0)=H_v(0)$. It follows that $\mathsf{PRA}^\prime \vdash A_v(0,\dotsc, 0)=B_v(0,\dotsc, 0)$, where $A_v =A_v(y_1,  \dotsc, y_n)$ and $B_v =B_v(y_1,  \dotsc, y_n)$.

Consequently, 
\[\mathsf{PRA}^\prime \vdash \bigwedge_{v\in V} A_v(0,\dotsc, 0)=B_v(0,\dotsc, 0) \qquad\text{and}\qquad \mathsf{PRA}^\prime \vdash Q(0,\dotsc, 0),\]
where $Q=Q(y_1,\dotsc, y_n)$. Hence, $\mathsf{PRA}^\prime \vdash P(0)$.

In order to prove that $\mathsf{PRA}^\prime \vdash P(z)\to P(s(z))$, it is sufficient to show 
\[\mathsf{PRA}^\prime \vdash P(z)\to (y_1+\dotsb + y_n=s(z) \to A_v=B_v)\]
for each $v$ form $V$. Let $y_j$ be the active variable corresponding to the node $v$. Arguing in $\mathsf{PRA}^\prime$, we consider two cases: $y_j=0$ or $y_j=s(y^\prime_j)$. If $y_j=0$, then $A_v=B_v$ is equivalent to $G_v(0)=H_v(0)$, which is already provable in $\mathsf{PRA}^\prime$.

Suppose that $y_j=s(y^\prime_j)$, $P(z)$ and 
\begin{gather*}
y_1+\dotsb +y_{j-1}+ s(y^\prime_j)+ y_{j+1}+ \dotsb + y_n= s(z).
\end{gather*}
Then
\begin{gather}\label{assertion2}
y_1+\dotsb +y_{j-1}+ y^\prime_j+ y_{j+1}+ \dotsb + y_n= z.
\end{gather}
and
\begin{gather}\label{assertion3}
y_1,\dotsc, y_{j-1}, y^\prime_j, y_{j+1}, \dotsc , y_n \leqslant z.
\end{gather}
From (\ref{assertion3}), (\ref{assertion2}) and $P(z)$, we obtain $Q(y_1,\dotsc, y_{j-1}, y^\prime_j, y_{j+1}, \dotsc , y_n)$. 

Recall that the node $v$ is the conclusion of an application the rule ($\mathsf{case}$) in the main fragment of $\pi$. Let $w$ be the node corresponding to the right premise of this application. From (\ref{claim1}) and $Q(y_1,\dotsc, y_{j-1}, y^\prime_j, y_{j+1}, \dotsc , y_n)$, we have 
\[A_w(y_1,\dotsc, y_{j-1}, y^\prime_j, y_{j+1}, \dotsc , y_n)=B_w(y_1,\dotsc, y_{j-1}, y^\prime_j, y_{j+1}, \dotsc , y_n).\]
From the definition of the rule ($\mathsf{case}$), this equation coincides with
\[A_v(y_1,\dotsc, y_{j-1}, s(y^\prime_j), y_{j+1}, \dotsc , y_n)=B_v(y_1,\dotsc, y_{j-1}, s(y^\prime_j), y_{j+1}, \dotsc , y_n).\]
We recall that $y_j=s(y^\prime_j)$ and obtain the required equation 
\[A_v(y_1,\dotsc, y_n)=B_v(y_1,\dotsc,  y_n).\]
The second case is checked, and assertion (\ref{claim2}) is established.

Applying the induction rule in $\mathsf{PRA}^\prime$ for (\ref{claim2}), we obtain $\mathsf{PRA}^\prime \vdash P(z)$. Renaming bound variables in $P(z)$ and substituting $y_1+\dotsb + y_n$ for $z$, we see 
\begin{align*}
\mathsf{PRA}^\prime  &\vdash \forall y^{\prime\prime}_1,\dotsc , y^{\prime\prime}_n\leqslant z\; (y^{\prime\prime}_1+\dotsb + y^{\prime\prime}_n=z \to Q(y^{\prime\prime}_1,\dotsc, y^{\prime\prime}_n))  \\
&\vdash \forall y^{\prime\prime}_1,\dotsc , y^{\prime\prime}_n\leqslant y_1+\dotsb + y_n\; (y^{\prime\prime}_1+\dotsb + y^{\prime\prime}_n=y_1+\dotsb + y_n \to Q(y^{\prime\prime}_1,\dotsc, y^{\prime\prime}_n))\\
&\vdash y_1+\dotsb + y_n=y_1+\dotsb + y_n \to Q \quad\ \text{(since $y_i\leqslant y_1+\dotsb + y_n$ for $i\in \{1, \dotsc, n\}$)}\\
&\vdash Q.
\end{align*} 
From (\ref{assertion1}), it follows that $\mathsf{PRA}^\prime \vdash A=B$, which concludes the proof.
\end{proof}

\subsection*{Back to the equational language} Although the system $\mathsf{PRA}^\prime$ is given in the firs-order language with bounded quantification, it proves precisely the same equations as the original system $\mathsf{PRA}$. 
\begin{propos}\label{proposition2}
If an equation $A=B$ is provable in $\mathsf{PRA}^\prime$, then it is provable in $\mathsf{PRA}$.
\end{propos}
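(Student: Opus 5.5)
The plan is to interpret formulas of $\mathsf{PRA}^\prime$ as equations of $\mathsf{PRA}$ via characteristic terms, and to show that every theorem of $\mathsf{PRA}^\prime$ has a characteristic term provably equal to $1$ (that is, $s(0)$) in $\mathsf{PRA}$.

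First I would develop in $\mathsf{PRA}$ the standard Goodstein toolkit. This consists of primitive recursive symbols for predecessor, truncated subtraction $\dot-$, addition, multiplication, $\mathrm{sg}$, $\overline{\mathrm{sg}}$, and bounded product. I would derive their basic laws by ($\mathsf{ind}$), and establish Goodstein's key lemma: if $\mathsf{PRA}\vdash |A-B|=0$ then $\mathsf{PRA}\vdash A=B$, where $|A-B|:=(A\dot- B)+(B\dot- A)$, and conversely. Since the primitive recursive symbols are built only from $C$, $R$, $I^n_k$, $0$, $o$, $s$, I would write these concrete symbols explicitly and verify their defining equations using the inference rules for $C$ and $R$.

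Next, to each formula $\varphi$ of $\mathsf{PRA}^\prime$ I would assign a term $t_\varphi$ with the same free variables, by recursion on $\varphi$:
\begin{itemize}
\item $t_{A=B}:=\overline{\mathrm{sg}}(|A-B|)$;
\item $t_{\neg\varphi}:=1\dot- t_\varphi$;
\item $t_{\varphi\wedge\psi}:=t_\varphi\cdot t_\psi$, with the other connectives defined similarly;
\item $t_{\forall x\leqslant u\,\varphi}:=\prod_{x\leqslant u} t_\varphi$, via a bounded-product symbol obtained by $R$ and composition.
\end{itemize}
In $\mathsf{PRA}$ one proves $\mathrm{sg}(t_\varphi)=t_\varphi$, and $t_{A=B}=1$ is interderivable with $A=B$.

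The main step is an induction on $\mathsf{PRA}^\prime$-derivations (taking a Hilbert-style axiomatization) showing that $\mathsf{PRA}\vdash t_\varphi=1$ for each theorem $\varphi$. The individual cases go as follows.
\begin{itemize}
\item Propositional tautologies: since the $t_\varphi$ are $0/1$-valued, each tautology reduces to an identity between polynomial-like terms in finitely many $0/1$ values. I would verify it by repeated case splitting on whether an argument is $0$ or $s(y)$, realized through ($\mathsf{ind}$) (or Goodstein's substitution-based case analysis).
\item Modus ponens: from $t_\varphi=1$ and $1\dot-(t_\varphi\cdot(1\dot- t_\psi))=1$, derive $t_\psi=1$.
\item Equality axioms and defining equations: these are immediate from the $\mathsf{PRA}$ rules.
\item Bounded-quantifier axioms: $\forall x\leqslant u\,\varphi\wedge y\leqslant u\to\varphi(y)$ and the corresponding generalization rule, handled by properties of bounded product proved by ($\mathsf{ind}$).
\item The induction rule of $\mathsf{PRA}^\prime$ for arbitrary bounded formulas: from $t_{\varphi(0)}=1$ and $t_{\varphi(z)\to\varphi(s(z))}=1$ derive $t_{\varphi(z)}=1$. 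I would do this by applying ($\mathsf{ind}$) to the equation $t_{\varphi(z)}=1$, which requires putting the step in the form $G(s(x))=J(x,G(x))$. I would take $J(x,w):=w\cdot t_{\varphi(s(x))}+(1\dot- w)\cdot t_{\varphi(s(x))}$ and rewrite it using the implication hypothesis $t_{\varphi(x)}\cdot(1\dot- t_{\varphi(s(x))})=0$.
\end{itemize}

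Finally, for an equation $A=B$ provable in $\mathsf{PRA}^\prime$ we obtain $\mathsf{PRA}\vdash\overline{\mathrm{sg}}(|A-B|)=1$. From this we get $|A-B|=0$, and by Goodstein's lemma $A=B$.

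I expect the main obstacle to be the induction rule together with the propositional reasoning. Turning an arbitrary implication-based step into the rigid ($\mathsf{ind}$) format of $\mathsf{PRA}$ needs Goodstein's derived rule "from $f(x)\cdot(1\dot- f(s(x)))=0$ and $f(0)=1$ infer $f(x)=1$", and proving that derived rule inside equational $\mathsf{PRA}$ is the genuinely delicate computation. If it becomes unwieldy, I would instead cite Goodstein's development \cite{Goodstein57}, where precisely this equivalence between equational and propositional primitive recursive arithmetic is established.
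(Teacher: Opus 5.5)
Your plan is the paper's argument: translate $\mathsf{PRA}^\prime$-formulas into characteristic terms, show that theorems of $\mathsf{PRA}^\prime$ translate to provable equations (the paper's first lemma), and finish with Goodstein's lemma that $(A\dotminus B)+(B\dotminus A)=0$ yields $A=B$ (the paper's second lemma). The paper only cites Goodstein and Schwartz for both lemmas, and it uses the opposite convention: $0$ means true, $T_{A=B}=(A\dotminus B)+(B\dotminus A)$ is used as is, and $\forall x\leqslant t$ becomes a bounded sum. That spares it your $\overline{\mathrm{sg}}$ on atoms, your lemma $\mathrm{sg}(t_\varphi)=t_\varphi$, and your last step from $\overline{\mathrm{sg}}(|A-B|)=1$ to $|A-B|=0$; these are all routine, so nothing essential differs.

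One step fails as you wrote it: the choice of $J$ in the induction case. Take $G(x):=t_{\varphi(x)}$ and $H(x):=1$. Besides $G(0)=H(0)$ and $G(s(x))=J(x,G(x))$, the rule ($\mathsf{ind}$) needs the third premise $J(x,H(x))=H(s(x))$, i.e.\ $J(x,1)=1$.

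\begin{itemize}
\item For your $J(x,w):=w\cdot t_{\varphi(s(x))}+(1\dotminus w)\cdot t_{\varphi(s(x))}$, this premise reads $t_{\varphi(s(x))}=1$. That is an instance of the conclusion you are trying to reach, so the rule cannot be applied.
\item A symptom of the problem is that your second premise $t_{\varphi(s(x))}=J(x,t_{\varphi(x)})$ holds by $0/1$-valuedness alone, so the implication hypothesis is never used.
\item The rewrite $w\cdot t_{\varphi(s(x))}=w$ is valid only at $w=t_{\varphi(x)}$. So the hypothesis has to be used in proving the second premise, not built into $J$ as a function of $w$.
\end{itemize}

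Take instead $J(x,w):=w+(1\dotminus w)\cdot t_{\varphi(s(x))}$.

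\begin{itemize}
\item The third premise $J(x,1)=1$ is then immediate.
\item From $t_{\varphi(x)}\cdot(1\dotminus t_{\varphi(s(x))})=0$ and $0/1$-valuedness you get $t_{\varphi(x)}=t_{\varphi(x)}\cdot t_{\varphi(s(x))}$.
\item Hence $J(x,t_{\varphi(x)})=\bigl(t_{\varphi(x)}+(1\dotminus t_{\varphi(x)})\bigr)\cdot t_{\varphi(s(x))}=t_{\varphi(s(x))}$, which is the second premise.
\end{itemize}

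With this $J$, the derived rule you flag as the delicate point (from $f(0)=1$ and $f(x)\cdot(1\dotminus f(s(x)))=0$ infer $f(x)=1$) is a few lines of equational computation from the basic $0/1$ identities. You do not need to fall back on citing Goodstein for it.
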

This result is obtained by means of the follwing translation. For any formula $P$ of the language of $\mathsf{PRA}^\prime$, the primitive recursive term $T_P$ is inductively defined as: $T_\bot := s(0)$, $T_{A=B}= (A\dotminus B)+ (B \dotminus A)$, $T_{Q_0 \to Q_1}:=(1 \dotminus T_{Q_0}) \cdot T_{Q_1}$ and
\[
 T_{\forall x\leqslant t\; Q}:= \sum\limits_{0\leqslant x \leqslant t} T_{Q}(x).
\]
The following two lemmata are established along the lines of \cite{Goodstein57} and \cite{Schwartz87}, so we omit the proofs.
\begin{lemma}
If a formula $P$ is provable in $\mathsf{PRA}^\prime$, then the equation $T_P=0$ is provable in $\mathsf{PRA}$.
\end{lemma}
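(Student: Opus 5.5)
The proof is by induction on the length of a derivation of $P$ in $\mathsf{PRA}^\prime$. We fix a Hilbert-style presentation of $\mathsf{PRA}^\prime$. Its axioms are the defining equations of the primitive recursive symbols, the equality axioms, the propositional tautologies (in $\to,\bot$), and the bounded-quantifier axioms $\forall x\leqslant t\,Q \to (y\leqslant t\to Q(y))$ and $\forall x\leqslant 0\,Q\leftrightarrow Q(0)$, $\forall x\leqslant s(t)\,Q\leftrightarrow (\forall x\leqslant t\,Q \wedge Q(s(t)))$. Its rules are modus ponens, substitution, and the induction rule for arbitrary (bounded) formulas: from $P(0)$ and $P(z)\to P(s(z))$ infer $P(z)$. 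All of these are formulated in the language where the defined connectives $\neg,\wedge,\vee$ and $\leqslant$ are abbreviations.

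First we build a toolkit of equations derivable in $\mathsf{PRA}$ by $\mathsf{ind}$, in Goodstein's style. These include the usual commutative-semiring laws for $+,\cdot$, the laws for $\dotminus$, $x\dotminus x=0$, and $T_{A=B}=0 \Leftrightarrow A=B$; in particular, the latter means that from $(A\dotminus B)+(B\dotminus A)=0$ we can derive $A=B$ in $\mathsf{PRA}$. The key technical fact is that every $T_P$ is provably $\{0,1\}$-valued up to sign: $\mathsf{PRA}\vdash (1\dotminus(1\dotminus T_P))\cdot T_P = T_P$, or better, we replace $T_P$ by $\mathrm{sg}(T_P)$ in the definitions if needed. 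We also establish a derived \emph{modus ponens} rule in $\mathsf{PRA}$: if $T_{Q_0}=0$ and $(1\dotminus T_{Q_0})\cdot T_{Q_1}=0$ are provable, then $T_{Q_1}=0$ is provable. This follows by $\mathsf{rep}$ and $\mathsf{tran}$ from $1\dotminus 0=1$ and $1\cdot y=y$.

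Next, the induction cases. Substitution commutes with the translation ($T_{P(t)}$ is $T_P(t)$ syntactically, modulo renaming of bound variables inside the summation terms), so it is handled by $\mathsf{sub}$. Each axiom is checked individually. Equality axioms and defining equations reduce to the toolkit. The bounded-quantifier axioms follow from the recursion equations of the bounded sum $\sum_{x\leqslant t}$ (itself defined via $R$) together with the fact that a sum of natural numbers is $0$ iff each summand is; the latter is proved by $\mathsf{ind}$ on $t$. Propositional tautologies are the delicate part. We handle them by proving in $\mathsf{PRA}$ that each $T_P$ takes only the values $0$ and $1$ once normalized by $\mathrm{sg}$, and then verifying each tautology by case distinction on its atoms. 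The case distinction uses Goodstein's lemma that an equation $F(x)=G(x)$ follows from $F(0)=G(0)$ and $F(1)=G(1)$ when $x$ ranges over $\{0,1\}$, i.e.\ from a substitution argument after writing $x=\mathrm{sg}(x)$. Alternatively, we pick a small complete axiom set (e.g.\ Łukasiewicz's three schemes) so that only three polynomial identities need checking.

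The main obstacle is the induction rule for formulas. Given $\mathsf{PRA}\vdash T_{P(0)}=0$ and $\mathsf{PRA}\vdash (1\dotminus T_{P(z)})\cdot T_{P(s(z))}=0$, we must derive $T_{P(z)}=0$ using only the equational $\mathsf{ind}$ rule. Our plan is to use Goodstein's trick: from the step equation, show $T_{P(s(z))}\leqslant T_{P(z)}$ in the form of the equation $\mathrm{sg}(T_{P(s(z))})\cdot(1\dotminus \mathrm{sg}(T_{P(z)}))=0$. We then apply $\mathsf{ind}$ to the two terms $G(z):=\mathrm{sg}(T_{P(z)})$ and $H(z):=0$, with $J(z,y):=\mathrm{sg}(T_{P(s(z))})\cdot y$. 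The three premises of $\mathsf{ind}$ become: the base $G(0)=0$; the identity $\mathrm{sg}(T_{P(s(z))})=\mathrm{sg}(T_{P(s(z))})\cdot \mathrm{sg}(T_{P(z)})$, which follows from the step equation and the $\{0,1\}$-valuedness; and $J(z,0)=0$, which is trivial. Verifying the middle premise equationally, without any logic, is where we expect the real work to lie, and we follow Goodstein's treatment of the "uniqueness rule" there.
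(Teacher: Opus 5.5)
Your proposal is correct and is essentially the Goodstein-style argument the paper has in mind when it omits this proof and cites Goodstein and Schwartz. You check every axiom and rule of $\mathsf{PRA}^\prime$ under the translation, and the only substantial case, the induction rule, you correctly reduce to ($\mathsf{ind}$) with $G(z)=\mathrm{sg}(T_{P(z)})$, $H(z)=0$ and $J(z,y)=\mathrm{sg}(T_{P(s(z))})\cdot y$.

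Two small corrections. First, $T_P$ itself is not $\{0,1\}$-valued; already $T_{x=0}=(x\dotminus 0)+(0\dotminus x)$ is not, and only $\mathrm{sg}(T_P)$ is. Your identity $\mathrm{sg}(T_P)\cdot T_P=T_P$ holds for every value, and its actual role is to take you from the conclusion $\mathrm{sg}(T_{P(z)})=0$ of ($\mathsf{ind}$) back to $T_{P(z)}=0$, a step you should state explicitly. Second, the middle premise you expect to be hard is immediate: from $a\cdot(1\dotminus b)=0$ with $b$ of the form $\mathrm{sg}(\cdot)$, you get $a=a\cdot(b+(1\dotminus b))=a\cdot b+a\cdot(1\dotminus b)=a\cdot b$.
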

\begin{lemma}
If an equation $T_{A=B}=0$ is provable in $\mathsf{PRA}$, then $A=B$ is provable in $\mathsf{PRA}$.
\end{lemma}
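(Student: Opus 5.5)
The plan is to work entirely inside $\mathsf{PRA}$, using the standard Goodstein-style development of elementary arithmetic there: commutativity and associativity of $+$ and $\cdot$, cancellation, and the basic identities for truncated subtraction $\dotminus$. Given a proof of $T_{A=B}=0$, i.e. of
\[
(A\dotminus B)+(B\dotminus A)=0,
\]
I aim to derive $A=B$ in three steps.

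First, I derive $A\dotminus B=0$ and $B\dotminus A=0$ separately. From $u+v=0$ one obtains $u=0$ by showing $\mathsf{PRA}\vdash u\dotminus(u+v)=0$ and $u\dotminus(u+v)\dotminus\ldots$, or more simply via the identity $u = u\dotminus v'$ pattern. Concretely, I would prove the equation
\[
x=(x+y)\dotminus y
\]
by $\mathsf{ind}$ on $y$ and use it in the form $x\dotminus(x+y)=0$. Then $\mathsf{rep}$ applied to $(A\dotminus B)+(B\dotminus A)=0$ gives $(A\dotminus B)\dotminus 0=\ldots$. An alternative route, which avoids implications, is to write $u=u\cdot \mathrm{sg}(u)$ and observe that $\mathrm{sg}(u+v)\geq \mathrm{sg}(u)$ yields
\[
\mathrm{sg}(u)\cdot \mathrm{sg}(u+v)=\mathrm{sg}(u).
\]
Both $\mathrm{sg}(u)$ and this identity are established by $\mathsf{ind}$. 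Substituting $u+v=0$ via $\mathsf{rep}$ makes the left side $0$, so $\mathrm{sg}(u)=0$ and hence $u=u\cdot\mathrm{sg}(u)=0$.

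Second, I use the key identity
\[
x+(y\dotminus x)=y+(x\dotminus y),
\]
which is provable in $\mathsf{PRA}$. This is the classical Goodstein result, obtained by a double induction, or more simply by proving both sides equal to $\max(x,y)$. With $y\dotminus x=0$ and $x\dotminus y=0$ substituted in via $\mathsf{sub}$ and $\mathsf{rep}$, it gives $x+0=y+0$, hence $x=y$ by the defining equation of $+$, and then $\mathsf{sub}$ instantiates $x,y$ to $A,B$.

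Third, I chain these with $\mathsf{tran}$ and $\mathsf{rep}$. The one subtlety is that the rules of $\mathsf{PRA}$ are only free-variable rules, so "from $u=0$ conclude $\ldots$" must be realized purely by $\mathsf{rep}$ applied to closed-form identities. No conditional reasoning is available, which is why each step above is phrased as an unconditional identity into which an equation is substituted.

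The main obstacle is proving the needed identities in the purely equational $\mathsf{PRA}$, where $\mathsf{ind}$ has the restricted three-premise form. For $x+(y\dotminus x)=y+(x\dotminus y)$ I would follow Goodstein's proof of commutativity-type identities: induct on $x$ with $J$ chosen as a suitable function of the predecessor value. If that fails, I would first establish $\max$ as a primitive recursive term and prove both sides equal to it separately, using the uniqueness rule (a derived form of $\mathsf{ind}$) on each.
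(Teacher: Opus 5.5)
Your plan is the standard Goodstein argument that the paper defers to. You extract $A\dotminus B=0$ and $B\dotminus A=0$ (the $\mathrm{sg}$ route works, and so does $u=(u+v)\dotminus v=0\dotminus v=0$, which is what your garbled first variant should say), then instantiate $x+(y\dotminus x)=y+(x\dotminus y)$ at $A,B$ and finish with $\mathsf{rep}$ and $\mathsf{tran}$. Two small corrections: the substitution $x,y:=A,B$ must come before the $\mathsf{rep}$ steps, since $x=y$ is not derivable for free variables; and the key identity needs neither double induction nor a separate $\max$ term, because both sides satisfy $\phi(0)=x$ and $\phi(s(y))=\phi(y)+(s(y)\dotminus\phi(y))$, so one application of $\mathsf{ind}$ on $y$ with $J(y,z)=z+(s(y)\dotminus z)$ suffices, given the routine auxiliary identities for $\dotminus$.
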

Now Proposition \ref{proposition2} is established. Moreover, we see that the systems $\mathsf{PRA}$ and $\mathsf{CPRA}$ are equivalent.
\begin{theorem}
For any equation $A=B$, we have
\[\mathsf{PRA} \vdash A=B \Longleftrightarrow \mathsf{CPRA}\vdash A=B.\]
\end{theorem}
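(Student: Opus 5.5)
The equivalence follows by chaining the results already established, so the plan is to assemble the two directions separately.

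For the direction from left to right, assume $\mathsf{PRA}\vdash A=B$. The first Proposition applies directly. It replaces each application of ($\mathsf{ind}$) by the displayed cyclic derivation with a single back-link, which yields a cyclic proof in $\mathsf{CPRA}$. One should check that this derivation meets conditions (i)--(vii). The back-link goes from the leaf $G(x)=H(x)$ to the conclusion of ($\mathsf{case}$), so (vii) holds, and both nodes carry the same equation, so (v) holds. The path between them passes only through the right premise of ($\mathsf{case}$) and through ($\mathsf{rep}$) and ($\mathsf{tran}$), which gives (iii), (iv) and (vi). Since this was already verified in the proof of the first Proposition, the direction follows immediately.

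For the direction from right to left, assume $\mathsf{CPRA}\vdash A=B$. The preceding Theorem gives $\mathsf{PRA}^\prime\vdash A=B$. Proposition \ref{proposition2} then gives $\mathsf{PRA}\vdash A=B$. That proposition is itself obtained from the two lemmata. The first lemma, applied to the formula $A=B$, yields $\mathsf{PRA}\vdash T_{A=B}=0$, that is, $\mathsf{PRA}\vdash (A\dotminus B)+(B\dotminus A)=0$. The second lemma then turns this into $\mathsf{PRA}\vdash A=B$.

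Combining the two directions gives the stated equivalence. All of the substantive work is contained in the earlier results, chiefly the preceding Theorem and the two Goodstein-style lemmata, whose proofs the paper omits. Taken as given, none of the steps here is an obstacle. The one point that needs care is that the first lemma is applied to an atomic formula, whose translation is exactly $T_{A=B}$; this matches the hypothesis of the second lemma, so the chain closes.
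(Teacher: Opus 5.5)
Your proposal is correct and follows the paper's own argument: the forward direction is Proposition 1, and the converse chains the CPRA-to-$\mathsf{PRA}^\prime$ theorem with Proposition~\ref{proposition2}, i.e.\ the first lemma applied to the atomic formula $A=B$ followed by the second lemma. Your explicit check of conditions (i)--(vii) for the gadget replacing ($\mathsf{ind}$) is a harmless addition that the paper leaves implicit.
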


\end{document}